\theoremstyle{plain}
\newtheorem{proposition}{Proposition}[section]
\newtheorem{theorem}[proposition]{Theorem}
\newtheorem{lemma}[proposition]{Lemma}
\theoremstyle{definition}
\newtheorem{definition}[proposition]{Definition}
\theoremstyle{remark}
\DeclareMathOperator{\tr}{tr}
\DeclareMathOperator{\RR}{\mathbb{R}}
\DeclareMathOperator{\JJ}{\mathbb{J}}
\newcommand{\abs}[1]{\left|#1\right|}
\newcommand{\norm}[1]{\left\|#1\right\|}
\newcommand{\wt}[1]{\widetilde{#1}}
\begin{document}

\title{Asymptotically harmonic manifolds without focal points}
\author{Andrew M. Zimmer}\address{Department of Mathematics, University of Michigan, Ann Arbor, MI 48109.}
\email{aazimmer@umich.edu}
\date{\today}
\keywords{asymptotically harmonic manifolds, geodesic flow, horospheres}

\begin{abstract}
In this note we show that a compact asymptotically harmonic manifold without focal points is either flat or a rank one locally symmetric space. 
\end{abstract}

\maketitle

\section{Introduction}

A complete Riemannian manifold $(M,g)$ without conjugate points is called \emph{asymptotically harmonic} if the mean curvature of the horospheres is constant. Examples of asymptotically harmonic manifolds include flat spaces and rank one locally symmetric spaces. The purpose of this note is to show that these are the only compact asymptotically harmonic manifolds without focal points.

\begin{theorem}
\label{thm:main}
If $(M,g)$ is a compact asymptotically harmonic manifold without focal points, then $(M,g)$ is either flat or a rank one locally symmetric space. 
\end{theorem}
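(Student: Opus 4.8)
The plan is to split according to whether the constant mean curvature $h\ge 0$ of the horospheres — equivalently the constant value of $\Delta b_\xi$ for every Busemann function $b_\xi$ on the universal cover $X=\wt{M}$ — vanishes or is positive. Fix a unit-speed geodesic $\gamma$ in $X$ and let $U(t)$ be the shape operator on $\dot\gamma(t)^{\perp}$ of the stable horosphere through $\gamma(t)$; it satisfies the Riccati equation $U'+U^2+R_{\dot\gamma}=0$, where $R_{\dot\gamma}v=R(v,\dot\gamma)\dot\gamma$. The absence of focal points is equivalent to $-U(t)$ being positive semidefinite along every geodesic (stable Jacobi fields do not expand), and asymptotic harmonicity is the statement $\tr(-U(t))\equiv h$. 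Taking the trace of the Riccati equation and using $(\tr U)'=0$ yields the pointwise identity $\Ric(v,v)=-\norm{U}^2\le 0$ for every unit vector $v$, with equality exactly where $U=0$.

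\emph{The case $h=0$.} Then $-U$ is positive semidefinite with vanishing trace, hence $U\equiv 0$: every horosphere is totally geodesic. The Riccati equation then forces $R_{\dot\gamma}\equiv 0$ along every geodesic, so every $2$-plane — being spanned by a geodesic velocity and a perpendicular vector — is flat, and $(M,g)$ is flat. (This is a Riccati-equation argument as in Ranjan--Shah's treatment of minimal horospheres, now in the no-focal-points category.) Assume henceforth $h>0$; the trace condition forces $U\ne 0$ everywhere, so by the identity above $\Ric<0$ on all of $M$.

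\emph{The case $h>0$: the geodesic flow is Anosov.} This is where I expect the work. First, $X$ is de~Rham irreducible: a nontrivial Riemannian splitting would make the Busemann function at a boundary point spread between the factors a nontrivial convex combination of the factors' Busemann functions, so $\Delta b_\xi$ would vary with that direction — impossible when $\Delta b_\xi\equiv h>0$ (in particular $X$ has no Euclidean factor). Second, the geodesic flow $\phi^t$ on $SM$ is Anosov if and only if $X$ carries no nonzero Jacobi field bounded on all of $\mathbb{R}$; by the flat strip theorem for manifolds without focal points such a field would produce a geodesic carrying a nonzero parallel normal Jacobi field. If \emph{every} geodesic did so, the rank-rigidity theorem for manifolds without focal points would force $X$ to be a Riemannian product (excluded) or an irreducible symmetric space of rank $\ge 2$, whose horospheres do not have constant mean curvature (again excluded). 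The remaining task is to pass from ``some geodesic'' to ``every geodesic'', using recurrence of $\phi^t$ and the rigidity of $\Delta b_\xi\equiv h$, and then to promote pointwise to uniform hyperbolicity by compactness; I regard this as the principal obstacle.

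\emph{The case $h>0$: rigidity.} Once $\phi^t$ is Anosov, $X$ is Gromov hyperbolic with boundary sphere $\partial X$, the stable and unstable horospheres are globally defined level sets of Busemann functions, and asymptotic harmonicity is exactly $\Delta b_\xi\equiv h$ for all $\xi\in\partial X$. Hence the volume entropy of $(M,g)$ equals $h$ — so, by Freire--Ma\~n\'e, does the topological entropy of $\phi^t$ — and the geodesic-flow Jacobian along (un)stable horospheres is $e^{\pm ht}$, so the Riemannian volume on horospheres, viewed as the family of conditionals of Liouville measure, scales exactly as the conditionals of the Bowen--Margulis measure; thus the measure of maximal entropy of $\phi^t$ is Liouville measure, with smooth conditionals. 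Combined with the regularity of the Anosov splitting — which the equation $\Delta b_\xi\equiv h$ should bootstrap out of the $C^{1,1}$ regularity of Busemann functions that the no-focal-points hypothesis already supplies — one applies the rigidity theorem of Benoist--Foulon--Labourie for contact Anosov flows with smooth splitting to see that $\phi^t$ is, up to a finite cover and a constant time change, smoothly conjugate to the geodesic flow of a compact rank one locally symmetric space; marked length spectrum rigidity for rank one locally symmetric spaces then upgrades this to an isometry, and $(M,g)$ is a rank one locally symmetric space. The two genuine obstacles are the Anosov property and the regularity of the Anosov splitting needed for Benoist--Foulon--Labourie; the entropy and measure identities in between are routine.
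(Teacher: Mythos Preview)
Your outline matches the paper's: reduce to the rank one case, show the geodesic flow is Anosov, then invoke the Foulon--Labourie / Benoist--Foulon--Labourie / Besson--Courtois--Gallot chain. Your treatment of the $h=0$ case is exactly the paper's Lemma~3.4, and your irreducibility and higher-rank exclusions correspond to the paper's Lemmas~3.2 and~3.3. The final rigidity step is also the same: the paper simply cites \cite{FL92}, \cite{BFL92}, \cite{BCG95} (packaged as Knieper's Theorem~3.6), so the regularity of the Anosov splitting you flag as an obstacle is precisely what Foulon--Labourie establish for asymptotically harmonic manifolds with Anosov geodesic flow --- you do not need to bootstrap it yourself.

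The genuine gap you identify --- passing from ``some geodesic has no bounded Jacobi field'' to ``every geodesic has no bounded Jacobi field'', and then to uniform hyperbolicity --- is exactly where the paper's argument differs from yours, and the paper's mechanism is quite concrete. Set $V(v)=U^u(v)-U^s(v)$ on $v^\perp$. The paper shows (i) $\det V$ is \emph{flow-invariant}: differentiate $\log\det V$ along the flow using the Riccati equations to get $2\tr X=-\tr U^u-\tr U^s=0$ by asymptotic harmonicity; (ii) $v\mapsto U^s(v)$, hence $\det V$, is \emph{continuous} (monotone convergence of $U_T^s$ plus Dini's theorem, again using that $\tr U^s$ is constant); and (iii) the geodesic flow on $SM$ is \emph{topologically transitive} in the rank one no-focal-points setting, by Hurley. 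Together these force $\det V\equiv\beta$, and $\beta\neq 0$ since some vector has rank one. Since $V$ is positive semidefinite with $\tr V\equiv 2\alpha$, every eigenvalue lies in $[\beta(2\alpha)^{1-n},2\alpha]$, giving a uniform lower bound $g(V(v)X,X)\ge\beta(2\alpha)^{1-n}g(X,X)$; combined with $\|U^s(v)\|\le\alpha$, Bolton's criterion (the paper's Theorem~2.2) yields the Anosov property directly. Your recurrence intuition is on the right track, but the invariant $\det V$ is the missing quantitative object that turns it into a proof.
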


If $(M,g)$ is also assumed to have negative curvature, then $(M,g)$ is a rank one locally symmetric space by the combined results of Foulon and Labourie~\cite{FL92}, Benoist, Foulon, and Labourie~\cite{BFL92}, and Besson, Courtois, and Gallot~\cite{BCG95}. As observed by Knieper~\cite[Theorem 3.6]{knieper11}, these arguments actually show that any compact asymptotically harmonic manifold whose geodesic flow is Anosov is a rank one locally symmetric space. Using their work, Theorem~\ref{thm:main} follows from:

\begin{theorem}
\label{thm:main2}
If $(M,g)$ is a finite volume asymptotically harmonic manifold without focal points, then $(M,g)$ is either flat or the geodesic flow on $SM$ is Anosov with respect to the Sasaki metric. 
\end{theorem}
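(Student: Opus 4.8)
The plan is to reduce the statement to a criterion of Eberlein and then to verify that criterion. On a manifold without conjugate points the stable and unstable Jacobi tensors $U^s$ and $U^u$ are defined along every geodesic, and the absence of focal points guarantees that they are continuous on $SM$, that $U^u \geq 0 \geq U^s$, and that a perpendicular Jacobi field bounded on all of $\RR$ is parallel. Writing the mean curvature of the unstable horosphere through $v$ as $\tr U^u(v)$, asymptotic harmonicity says $\tr U^u \equiv h$ for a constant $h$, and reversing time gives $\tr U^s \equiv -h$; note $h \geq 0$ since $U^u \geq 0$. In these terms the geodesic flow on $SM$ (with the Sasaki metric) is Anosov exactly when the distributions $E^s(v) = \{(w, U^s(v)w)\}$ and $E^u(v) = \{(w, U^u(v)w)\}$ span a complement to the flow direction at every point with uniform hyperbolicity; since the absence of focal points implies bounded asymptote, Eberlein's theorem reduces this, for a finite volume manifold without conjugate points, to showing that no geodesic carries a nonzero parallel perpendicular Jacobi field.

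First I would dispose of the case $h = 0$. Then $U^u$ is positive semidefinite with vanishing trace along every geodesic, so $U^u \equiv 0$; substituting into the Riccati equation $(U^u)' + (U^u)^2 + R = 0$ forces the curvature operator $R$ to vanish along every geodesic, hence every sectional curvature vanishes and $(M,g)$ is flat.

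From now on assume $h > 0$; it remains to rule out nonzero parallel perpendicular Jacobi fields. One input is that we cannot be in the ``higher rank'' situation: if \emph{every} $v \in SM$ carried a nonzero parallel perpendicular Jacobi field, then by the rank rigidity theorem for finite volume manifolds without focal points the universal cover $\wt M$ would be a nontrivial Riemannian product or an irreducible symmetric space of noncompact type of rank at least two, and using the explicit description of Busemann functions in each case one checks that the mean curvature of horospheres is non-constant unless $\wt M = \RR^n$, which is excluded by $h>0$. The main work, and the step I expect to be the genuine obstacle, is to promote this from the generic vector to every vector: the set $\mathcal{Z} \subset SM$ of vectors admitting a nonzero parallel perpendicular Jacobi field is closed and invariant under the geodesic flow, and we have just seen $\mathcal{Z} \neq SM$, but we need $\mathcal{Z} = \emptyset$.

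To attack this I would argue by contradiction, exploiting the rigidity that asymptotic harmonicity imposes. If $v_0 \in \mathcal{Z}$ with parallel perpendicular Jacobi field $E$, then $E$ lies in the kernel of both $U^u$ and $U^s$ all along $\gamma_{v_0}$, so $0$ is an eigenvalue of $U^u(\phi_t v_0)$ for every $t$ while the remaining eigenvalues still sum to $h$; moreover the flat strip theorem (valid without focal points) produces a totally geodesic flat strip containing $\gamma_{v_0}$, forcing every vector tangent to the strip into $\mathcal{Z}$. Combining this with the finiteness of the Liouville measure—so that $\mathcal{Z}$, if nonempty, contains recurrent vectors—and with the continuity of $U^u$ and $U^s$, I would study the function $v \mapsto \lambda_{\min}\!\left(U^u(v) - U^s(v)\right) \geq 0$, whose zero set is precisely $\mathcal{Z}$: the Riccati equations give $(U^u - U^s)' = -U^u(U^u - U^s) - (U^u - U^s)U^s$, and a maximum-principle argument along orbits that are forced to return near a minimum of $\lambda_{\min}$ should be incompatible with $\tr(U^u - U^s) \equiv 2h > 0$ unless $\mathcal{Z}$ is empty. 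Once $\mathcal{Z} = \emptyset$ is in hand, $E^s \oplus E^u$ is everywhere transverse to the flow direction and Eberlein's criterion finishes the proof.
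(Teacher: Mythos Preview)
Your overall architecture matches the paper's: use rank rigidity for manifolds without focal points to reduce to the rank one case, handle the higher-rank situation by showing the constant $h$ must vanish (hence flat), and in the rank one case show that $U^u(v)-U^s(v)$ is uniformly positive definite so that an Eberlein/Bolton type criterion applies. The case $h=0\Rightarrow$ flat is exactly the paper's Lemma~\ref{lem:prod2}.

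The genuine gap is the step you yourself flag as ``the main work'': passing from $\mathcal Z\neq SM$ to $\mathcal Z=\emptyset$. Your sketch---a maximum-principle argument for $\lambda_{\min}(V)$ along recurrent orbits, combined with the flat strip theorem---does not go through as stated. The set $\mathcal Z$ is flow-invariant, so along any orbit in $\mathcal Z$ one has $\lambda_{\min}(V)\equiv 0$; recurrence of such an orbit gives no contradiction, and $\tr V\equiv 2h>0$ is perfectly compatible with a zero eigenvalue persisting. The flat strip only enlarges $\mathcal Z$ without forcing a global splitting, so it does not help either. In short, nothing in your outline prevents $\mathcal Z$ from being a nonempty proper closed invariant set.

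The paper closes this gap with two ingredients you did not use. First, instead of $\lambda_{\min}(V)$ it looks at $\det V$, and the crucial computation
\[
\frac{d}{dt}\log\det V \;=\; \tr(V'V^{-1}) \;=\; -\tr U^u-\tr U^s \;=\; 0
\]
shows that asymptotic harmonicity makes $\det V$ \emph{flow-invariant} (this is exactly where the hypothesis $\tr U^s\equiv$ const is used in the rank one case). Second, in rank one with finite volume and no focal points the geodesic flow is topologically transitive (Hurley), so a continuous flow-invariant function is constant. Since rank one means $\det V\not\equiv 0$, one gets $\det V\equiv\beta\neq 0$, and then the trace bound $\tr V\equiv 2h$ forces $\lambda_{\min}(V)\geq \beta(2h)^{1-n}$ uniformly. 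That is the uniform transversality you need, and Bolton's criterion (Theorem~\ref{thm:anosov}) finishes. Your proposal is missing both the invariant $\det V$ and the appeal to topological transitivity; without these, the argument does not close.
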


If $(M,g)$ is not assumed to be compact there exist nonsymmetric homogeneous Hadamard manifolds, namely the Damek-Ricci spaces, which are asymptotically harmonic ~\cite{DR92}. Asymptotically harmonic manifolds without conjugate points have been classified in dimension 3~\cite{HKS07,knieper94,SS08,shah11} and in the Einstein, homogeneous case (see Heber~\cite{heber06}). 

Asymptotically harmonic manifolds are related to harmonic manifolds. A complete Riemannian manifold $(M,g)$ is called \emph{harmonic} if about any point the geodesic spheres of sufficiently small radii are of constant mean curvature. In the introduction of his 1990 paper on harmonic manifolds Szabo~\cite{szabo90} observes that a harmonic manifold without conjugate points is also ``globally'' harmonic: about any point the geodesic spheres of any radii are of constant mean curvature. Thus every harmonic manifold without conjugate points is an asymptotically harmonic manifold. There has been a great deal of progress towards classifying all harmonic manifolds and we refer the reader to the introduction in Heber~\cite{heber06} for a survey of the literature.

Recently Knieper~\cite{knieper11} demonstrated Theorem~\ref{thm:main2} under the assumption that $(M,g)$ is a harmonic manifold without focal points. Unlike the argument presented in this paper, he does not require that $(M,g)$ has finite volume. The methods used here, while of a similar flavor to Knieper's argument, differ substantially. One key feature of harmonic manifolds that Knieper exploits is that the volume density in normal coordinates depends only on the radius. This symmetry is unavailable to us in the asymptotically harmonic case, instead we use a recent generalization of the rank rigidity theorem to manifolds without focal points by Watkins~\cite{watkins11} to reduce to the rank one case. We then show that a function related to curvatures of both the stable and unstable horospheres is constant and nonzero. This implies, by a theorem of Bolton~\cite{bolton79}, that the geodesic flow is Anosov.

\section{Preliminaries}

Every Riemannian manifold $(M,g)$ considered here will be complete, $SM$ will denote the unit tangent bundle, and $\phi_t$ the geodesic flow on $SM$.

\subsection{Asymptotically harmonic manifolds:} We begin by introducing the stable and unstable Riccati solutions. The following discussion closely follows the work of Green~\cite{green58}. Assume $M$ is a complete manifold without conjugate points and $v \in SM$. If $\gamma_v$ is the unique unit speed geodesic with $\gamma^\prime_v(0)=v$ and $R(t)=R(\gamma^\prime(t),\cdot)\gamma^\prime(t)$ is the curvature tensor along $\gamma$, let $\JJ(t):T_{\gamma(t)}M \rightarrow T_{\gamma(t)}M$ be the unique solution to the differential equation 
\begin{align*}
\JJ^{\prime\prime}(t)+R(t)\JJ(t)=0
\end{align*}
with $\JJ(0)=0$ and $\JJ^\prime(0)=Id$. As $M$ has no conjugate points, the endomorphism $\JJ(t)$ is invertible for all $t \neq 0$. Next for $T \in \RR$ define
\begin{align*}
E_T(t)=\JJ(t)\int_{t}^T \JJ^{-1}(s)\left(\JJ^{-1}(s)\right)^* ds
\end{align*}
where $*$ is the adjoint operator with respect to the inner product $g$. As $\JJ^\prime(t)\JJ^{-1}(t)$ is symmetric, the tensors $E_T$ satisfy the Jacobi equation
\begin{align}
\label{eq:stable_jacobi_tensor}
E_T^{\prime\prime}(t)+R(t)E_T(t)=0
\end{align}
and $E_T(0)=Id$ and $E_T(T)=0$. Next define $U_T^s(v):=E_T^\prime(0)$ and let $\phi_t:SM \rightarrow SM$ be the geodesic flow on $SM$. Using equation~\ref{eq:stable_jacobi_tensor} the paths $t \rightarrow U_T^s(\phi_tv)$ solves the differential equation
\begin{align*}
(U_T^s)^\prime + (U_T^s)^2+R = 0.
\end{align*}
Using the defining equation for $E_T$ we also see that
\begin{align}
\label{eq:riccati_mono}
U_{T_2}^s(v) - U_{T_1}^s(v) = \int_{T_1}^{T_2} \JJ^{-1}(s)\left(\JJ^{-1}(s)\right)^* ds
\end{align}
for $T_2>T_1$ implying that $U_T(v)$ is monotonically increasing in $T$. Further the sequence $\{ U_T(v)\}_{T >0}$ can be shown to be bounded. If one assumes a lower bound on the curvature tensor, this follows from standard comparision theorems. More generally Green~\cite{green58} provides an argument showing that $\{U_T^s(v)\}_{T >0}$ is bounded whenever $R(t)$ is continuous. 

The above discussion implies that the endomorphisms $U_T^s(v):v^{\bot} \rightarrow v^{\bot}$ converges montonically to a endomorphism $U^s(v):v^{\bot} \rightarrow v^{\bot}$ as $T \rightarrow \infty$. By construction the path $t \rightarrow U^s(\phi_t v)$ also solves the differential equation
\begin{align*}
(U^s)^\prime + (U^s)^2+R =0.
\end{align*}

Also for each $v \in SM$, define the linear map $U^u(v):v^{\bot} \rightarrow v^{\bot}$ by setting $U^u(v) := -U^s(-v)$. Then by definition the path $t \rightarrow U^u(\phi_t v)$ also solves the differential equation
\begin{align*}
(U^u)^\prime + (U^u)^2+R =0.
\end{align*}

The tensors $U^s$ and $U^u$ are called the \emph{stable and unstable Riccati solutions.} When $(M,g)$ is a negatively curved, compact Riemannian manifold the tensor $-U^s$ is the second fundamental form of the horospheres and determine the stable and unstable foliations (see Eberlein~\cite{eberlein73I}). Using the geometric interpretation we obtain the following definition for asymptotically harmonic manifolds.

\begin{definition} A complete Riemannian manifold $(M,g)$ without conjugate points is said to be \emph{asymptotically harmonic} if $\tr U^s(v)$ is constant on $SM$.
\end{definition}

When $(M,g)$ is asymptotically harmonic and compact the constant $-\tr U^s$ is equal to the volume growth entropy. To be more precise, let $(M,g)$ be a compact Riemannian manifold and $\wt{M}$ the universal cover of $M$. The \emph{volume growth entropy} of $M$ is defined to be:
\begin{align*}
h_{vol}(M) = \lim_{r\rightarrow \infty} \frac{ \log \text{Vol}( B_r(p))}{r}
\end{align*}
where $p \in \wt{M}$ and $B_r(p)$ is the ball of radius $r$ around $p$. Manning~\cite{manning79} showed that this limit always exists and is independent of $p$. Further Manning showed that when $M$ is nonpositively curved $h_{top}(M)=h_{vol}(M)$, where $h_{top}(M)$ is the topological entropy of the geodesic flow on $SM$. Freire and Ma\~{n}\'{e}~\cite{FM82} generalized this last result and showed that $h_{top}(M)=h_{vol}(M)$ when $M$ has no conjugate points. 

When $(M,g)$ is a compact asymptotically harmonic manifold and $\tr U^u(v) = -\tr U^s(v) \equiv \alpha$, then the constant $\alpha$ is actually equal to $h_{vol}(M)$ (see for instance Heber~\cite[Remark 2.2]{heber06}).

\subsection{Anosov geodesic flows} In this section we state a criterium for the geodesic flow being Anosov. Recall that for $v \in TM$ and $p=\pi(v)$ we have a canonical splitting 
\begin{align*}
T_v TM = T_p M \oplus T_pM
\end{align*}
where the first factor is the \emph{horizontal distribution} and the second is the \emph{vertical distribution}. Further if $v \in SM$, then 
\begin{align*}
T_v SM = \{ (X,Y) : Y \in v^{\bot}\}.
\end{align*}
Next define the \emph{stable and unstable Green subbundles} as
\begin{align*}
E^s(v) &= \{ (X,U^s(v)X) : X \in v^{\bot} \}, \\
E^u(v) &= \{ (X,U^u(v)X) : X \in v^{\bot} \}.
\end{align*}
We then have the following.

\begin{theorem}
\label{thm:anosov}
Let $(M,g)$ be a Riemannian manifold without conjugate points. If there exists $K,\delta >0$ such that $\norm{U^s(v)} \leq K$ for all $v \in SM$ and 
\begin{align*}
g(U^u(v)X-U^s(v)X,X) \geq \delta g(X,X) \text{ for all $v \in SM$ and $X \in v^{\bot}$}
\end{align*}
then the geodesic flow on $SM$ is Anosov with respect to the Sasaki metric.
\end{theorem}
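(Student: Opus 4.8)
The plan is to produce the Anosov splitting explicitly from the Green subbundles. Set $E^0(v)=\RR\cdot(v,0)\subset T_vSM$, the line tangent to the flow. First I would record the two easy facts. The subbundles $E^s,E^u$ are $\phi_t$-invariant: if $\xi=(X,U^s(v)X)\in E^s(v)$, then $d\phi_t\xi=(J(t),J^\prime(t))$ where $J$ is the Jacobi field along $\gamma_v$ with $J(0)=X$, and since $t\mapsto U^s(\phi_tv)$ solves the Riccati equation one has $J^\prime(t)=U^s(\phi_tv)J(t)$, so $d\phi_t\xi\in E^s(\phi_tv)$ (and symmetrically for $E^u$). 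Next, the hypothesis $g\big((U^u(v)-U^s(v))X,X\big)\geq\delta\,g(X,X)$ says exactly that the symmetric operator $D(v):=U^u(v)-U^s(v)$ satisfies $D(v)\geq\delta\cdot\mathrm{Id}$, hence is invertible; this forces $E^s(v)\cap E^u(v)=\{0\}$, and since $\dimension E^s(v)=\dimension E^u(v)=\dimension M-1$ and $E^0(v)$ is transverse to $E^s(v)\oplus E^u(v)$ (any vector of the latter has horizontal part in $v^\perp$, while $v\notin v^\perp$), a dimension count gives $T_vSM=E^0(v)\oplus E^s(v)\oplus E^u(v)$. Since $(v,0)$ is a Sasaki-unit vector preserved in length by $d\phi_t$, the whole theorem reduces to uniform exponential contraction of $E^s$ in forward time and of $E^u$ in backward time.

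The main obstacle is the contraction estimate, equivalently the uniform exponential decay of stable Jacobi fields; I stress that this does \emph{not} follow from the pointwise hypotheses, which only yield $\abs{J(t)}\leq e^{(K-\delta)t}\abs{J(0)}$ --- useless when $\delta\leq K$ --- so one has to exploit how the Riccati solutions propagate along the orbit, with no compactness available. Here is the device I would use. Abbreviating $U^s,U^u,D$ for the values at $\phi_tv$, subtracting the two Riccati equations gives the algebraic identity $D^\prime=(U^s)^2-(U^u)^2=-U^sD-DU^s-D^2$. Fix a nonzero stable Jacobi field $J$ along $\gamma_v$, put $e(t)=J(t)/\abs{J(t)}$ and $u(t)=g(U^se(t),e(t))=\tfrac12\tfrac{d}{dt}\log\abs{J(t)}^2$ (so $e^\prime=U^se-ue$), and consider $\varphi(t)=g\big(D\,e(t),e(t)\big)$. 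A short computation using the displayed identity and the symmetry of $D$ and $U^s$ gives $\varphi^\prime(t)=-g(D^2e,e)-2u(t)\varphi(t)\leq-\big(\delta+2u(t)\big)\varphi(t)$, where the inequality uses $D^2\geq\delta D$ (valid since $D\geq\delta\cdot\mathrm{Id}$). As $\varphi>0$, integrating and substituting $\int_0^t u=\log(\abs{J(t)}/\abs{J(0)})$ yields $\varphi(t)\leq\varphi(0)\,e^{-\delta t}\,\abs{J(0)}^2/\abs{J(t)}^2$. Combining this with $\varphi(t)\geq\delta$ (from $D\geq\delta\cdot\mathrm{Id}$) and $\varphi(0)\leq\norm{D(v)}\leq\norm{U^u(v)}+\norm{U^s(v)}\leq2K$ (using $\norm{U^u(v)}=\norm{U^s(-v)}\leq K$) gives $\abs{J(t)}^2\leq(2K/\delta)\,e^{-\delta t}\abs{J(0)}^2$.

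Then the rest is bookkeeping. For $\xi=(X,U^s(v)X)\in E^s(v)$ the Sasaki norm is $\norm{d\phi_t\xi}^2=\abs{J(t)}^2+\abs{U^s(\phi_tv)J(t)}^2\leq(1+K^2)\abs{J(t)}^2$, so the estimate above gives $\norm{d\phi_t\xi}\leq C\,e^{-\delta t/2}\norm{\xi}$ for $t\geq0$ with $C=\sqrt{(1+K^2)(2K/\delta)}$. For the unstable bundle I would use the reflection $v\mapsto-v$: since $U^u(v)=-U^s(-v)$, a direct check shows that under $t\mapsto-t$ an unstable Jacobi field along $\gamma_v$ becomes a stable Jacobi field along $\gamma_{-v}$, while the hypotheses are invariant under $v\mapsto-v$ (indeed $D(-v)=D(v)$), so the previous paragraph applied to $\gamma_{-v}$ gives $\norm{d\phi_{-t}\eta}\leq C\,e^{-\delta t/2}\norm{\eta}$ for $\eta\in E^u(v)$, $t\geq0$. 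Finally, a $\phi_t$-invariant splitting consisting of the flow line together with subbundles that are uniformly exponentially contracted (resp.\ expanded) is automatically continuous --- the bundles coincide with the nested intersections of forward (resp.\ backward) iterates of the natural invariant cone fields --- so the geodesic flow on $SM$ is Anosov with respect to the Sasaki metric.
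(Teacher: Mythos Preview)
Your argument is correct. The paper does not actually prove this statement: it simply observes that Bolton~[Proposition~1]{bolton79} proves the theorem under the extra assumption that the sectional curvature is bounded below by $-K^2$, and that this bound is used in Bolton's proof only to obtain $\norm{U^s(v)}\leq K$, so that his argument goes through verbatim under the hypotheses stated here.

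By contrast, you supply a self-contained proof. The key device is the scalar function $\varphi(t)=g(D\,e,e)$ along a stable Jacobi field, where $D=U^u-U^s$; from the Riccati equations you derive the exact identity $\varphi'=-g(D^2e,e)-2u\,\varphi$, and then use $D\geq\delta\cdot\mathrm{Id}\Rightarrow D^2\geq\delta D$ to get the differential inequality $\varphi'\leq-(\delta+2u)\varphi$. Integrating and comparing the universal lower bound $\varphi\geq\delta$ with the upper bound $\varphi(0)\leq 2K$ yields the exponential decay $\abs{J(t)}^2\leq(2K/\delta)e^{-\delta t}\abs{J(0)}^2$ directly, with explicit constants depending only on $K$ and $\delta$. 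This is a clean alternative to quoting Bolton: it isolates precisely why the uniform gap $D\geq\delta\cdot\mathrm{Id}$ together with the uniform bound $\norm{U^s}\leq K$ forces hyperbolicity, without any compactness or curvature assumption. The remaining steps (Sasaki-norm bookkeeping, the reflection $v\mapsto -v$ for the unstable bundle, and continuity of the splitting from uniform hyperbolicity) are standard and correctly handled.
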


When $A$ is a linear operator between normed spaces, $\norm{A}$ is the usual operator norm.

Theorem~\ref{thm:anosov} follows from a result of Bolton~\cite[Proposition 1]{bolton79}. He proves the above theorem with the additional assumption that the sectional curvature is bounded from below by $-K^2$ with $K \geq 0$. Given such a bound on the sectional curvature, one can use comparision theorems to deduce that $\norm{U^s(v)} \leq K$. In fact, this is the only way Bolton's argument uses the curvature bound and the above Theorem follows from his argument verbatim.

Eberlein proved the above theorem in the compact case~\cite{eberlein73I}. We also remark that when the geodesic flow is Anosov, then the stable and unstable Green subbundles are the stable and unstable distributions for the Anosov flow.

\section{Reduction to the rank one case}

Using a recent generalization of the rank rigidity theorem we reduce to the case in which $(M,g)$ has geometric rank one. If $(M,g)$ is a Riemannian manifold and $v \in SM$ then the \emph{rank} of $v$ is the dimension of the vector space of bounded Jacobi fields along the geodesic $\gamma_v(t)$. Then define the \emph{rank} of $M$ to be minimum rank over all $v \in SM$. 

A vector $v \in SM$ is said to be \emph{$\text{Isom}(M)$-recurrent} if there are sequences $(t_n) \in \RR$ and $\psi_n \in \text{Isom}(M)$ such that $\psi_n \circ \phi_{t_n}(v) \rightarrow v$ as $n \rightarrow \infty$. A recent generalization of the rank rigidity theorem due to Watkins applies to manifolds without focal points.

\begin{theorem}\cite{watkins11}
Suppose $(\wt{M},g)$ is a simply connected Riemannian manifold without focal points. If a dense subset of $S\wt{M}$ is $\text{Isom}(\wt{M})$-recurrent and $\wt{M}$ has rank greater or equal to 2, then $\wt{M}$ is either a Riemannian product or a symmetric space.
\end{theorem}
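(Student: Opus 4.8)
The plan is to adapt Ballmann's rank rigidity theorem, together with the refinements of Burns--Spatzier and of Eberlein--Heber in the nonpositively curved case, to the no--focal--points setting; this is precisely the work of Watkins \cite{watkins11}, and I will only indicate the architecture. Throughout, the role of nonpositivity of the curvature is played by the convexity properties that persist when one assumes only the absence of focal points, and the role of finite volume (or of the duality condition on $\pi_1$) is played by the density of the $\text{Isom}(\wt M)$--recurrent set.

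First I would set up the structure theory of the rank. In a manifold without focal points the function $t \mapsto d(\gamma_1(t),\gamma_2(t))$ is convex for every pair of geodesics, and, using standard facts about such manifolds, every Jacobi field along $\gamma_v$ that is bounded on all of $\RR$ is parallel; hence the rank of $v$ equals the dimension of the space $\mathcal{P}(v)$ of parallel Jacobi fields along $\gamma_v$. This dimension is upper semicontinuous in $v$, so the set $\mathcal{R} \subseteq S\wt M$ of \emph{regular} vectors --- those realizing the minimum value, the rank $r \geq 2$ --- is open; the hypothesis that the $\text{Isom}(\wt M)$--recurrent vectors are dense then upgrades $\mathcal{R}$ to a dense set, invariant under the geodesic flow and under $\text{Isom}(\wt M)$. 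For $v \in \mathcal{R}$ the parallel Jacobi fields along $\gamma_v$ integrate, via a flat strip argument (available because of the convexity above), to a unique totally geodesic $r$--flat $F(v) \cong \RR^r$ through $v$, depending continuously on $v$.

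Next comes the dichotomy. If the holonomy representation of $\wt M$ is reducible, the de Rham decomposition theorem exhibits $\wt M$ as a nontrivial Riemannian product and we are done. Otherwise $\wt M$ is irreducible and the goal is to prove it is symmetric, i.e. that $\nabla R \equiv 0$. Here I would run the Ballmann--Burns--Spatzier argument: using the abundance of $r$--flats through regular vectors together with the recurrence of the geodesic flow, one shows that along a regular recurrent geodesic the curvature operator is, up to parallel transport, invariant under a rich enough family of holonomy maps to force $\nabla R = 0$; equivalently, the regular points at infinity together with their flats make the geometry at infinity homogeneous enough that $\text{Isom}(\wt M)$ acts transitively and $\wt M$ is symmetric.

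I expect the irreducible case to be the main obstacle, and it is the technical core of the matter. One must check that each ingredient of the nonpositively curved proof that genuinely relies on $\mathrm{CAT}(0)$ geometry --- the flat strip theorem, the structure and topology of the Tits boundary, angle comparison, and convexity of Busemann functions --- has a valid replacement using only convexity of $t \mapsto d(\gamma_1(t),\gamma_2(t))$ and the monotonicity of norms of stable and unstable Jacobi fields, and that $\text{Isom}(\wt M)$--recurrence can stand in wherever the classical argument uses ergodicity of the geodesic flow on a finite volume quotient. Assembling these substitutions carefully is exactly what Watkins carries out.
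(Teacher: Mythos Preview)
The paper does not contain a proof of this theorem; it is quoted from Watkins \cite{watkins11} and used as a black box in the reduction to the rank one case. There is therefore no proof in the paper against which to compare your sketch.

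Your outline is a reasonable high-level summary of the architecture one expects, following the Ballmann and Burns--Spatzier template, and you correctly identify the irreducible case and the replacement of $\mathrm{CAT}(0)$ tools by no-focal-points analogues as the crux. One point deserving more care: you assert that in a manifold without focal points every bounded Jacobi field along $\gamma_v$ is parallel, and build the flats $F(v)$ from this. In nonpositive curvature this follows at once from the convexity of $t\mapsto\|J(t)\|^2$, but under only the no-focal-points hypothesis it is not immediate; the paper's own Lemma~\ref{lem:kerV} characterizes bounded Jacobi fields as those with initial data in $E^s(v)\cap E^u(v)$, which yields $\|J(t)\|\equiv\|J(0)\|$ but not directly $J'\equiv 0$ without a curvature sign. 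Establishing the bounded-implies-parallel step and the corresponding flat strip theorem in the no-focal-points setting is precisely part of the work that has to be done, so this should be listed among the ingredients requiring a genuine substitute rather than asserted as a known fact.
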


In the case when $(\wt{M},g)$ has nonpositive curvature, Theorem 3.1 was originally proved by Ballmann~\cite{ballmann85} and Burns and Spatzier~\cite{BS87}.

When $(\wt{M},g)$ has a finite volume quotient, then a dense subset of $S\wt{M}$ will be $\text{Isom}(\wt{M})$-recurrent by the Poincar\'{e} recurrence theorem.

Notice that a manifold is asympototically harmonic if and only if its universal cover is asymptotically harmonic. We now show that all higher rank asymptotically harmonic manifolds are flat. Ledger~\cite{ledger57} showed that irreducible symmetric harmonic manifolds must have rank one (see also Eschenburg~\cite{eschenburg80}). The same is true for asymptotically harmonic manifolds, for instance if $(M,g)$ is a non-compact symmetric space the discussion in~\cite{eschenburg80} implies that
\begin{align*}
\tr U^s(v) = -\sum_{\alpha \in Rt} k_{\alpha} \abs{\alpha(v)}
\end{align*}
where $Rt$ is the set of roots of $M$ and $k_{\alpha}$ are nonnegative integers. If rank is greater or equal to two and $(M,g)$ is irreducible, the right hand side will not be constant.

\begin{lemma} 
\label{lem:no_higher_rank}
Suppose $(\wt{M},g)$ is an irreducible symmetric space. Then $(\wt{M},g)$ is asymptotically harmonic if and only if $(\wt{M},g)$ has rank one.
\end{lemma}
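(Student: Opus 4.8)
The plan is to reduce to symmetric spaces of noncompact type and then read off the conclusion from the root-space formula for $\tr U^s$ recorded above. First I would dispose of the easy parts. Since asymptotic harmonicity presupposes the absence of conjugate points, any asymptotically harmonic irreducible symmetric space is of Euclidean or of noncompact type; the Euclidean irreducible case is $\wt{M} = \RR$, which has rank one and is trivially asymptotically harmonic, while a rank one symmetric space of noncompact type is asymptotically harmonic because then the maximal flat $\mathfrak{a}$ is one dimensional, its unit sphere is a pair of antipodal vectors $\pm v_0$, and $\abs{\alpha(\pm v_0)}$ is the same number for every root $\alpha$, so the formula below makes $\tr U^s$ constant. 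This settles every case of the lemma except one: it remains to show that an asymptotically harmonic irreducible symmetric space $\wt{M}$ of noncompact type has rank one.

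So let $\wt{M} = G/K$ be irreducible of noncompact type, with Cartan decomposition $\gL = \mathfrak{k}\oplus\mathfrak{p}$, a fixed maximal abelian subspace $\mathfrak{a}\subseteq\mathfrak{p}$, and $r = \dimension\mathfrak{a}$ the rank of $\wt{M}$. Since $G$ acts transitively on $\wt{M}$ by isometries and every $K$-orbit in the unit sphere of $\mathfrak{p}\cong T_o\wt{M}$ meets $\mathfrak{a}$, the function $\tr U^s$ is constant on $S\wt{M}$ if and only if $f(v):=\sum_{\alpha\in Rt}k_\alpha\abs{\alpha(v)}$ is constant on the unit sphere of $\mathfrak{a}$; here the $k_\alpha$ are the root multiplicities, which are strictly positive, and $\tr U^s(v)=-f(v)$ for $v\in\mathfrak{a}$ is the identity attributed to~\cite{eschenburg80} above. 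Now I would show this forces $r=1$. On the interior of a fixed Weyl chamber $C\subseteq\mathfrak{a}$ every $\alpha(v)$ has constant sign, so there $f$ agrees with a linear functional $v\mapsto\langle w,v\rangle$, and $w\neq 0$ because $f$ is a sum of positive terms and hence positive on $C$. If $f$ were equal to a constant $c$ on the unit sphere of $\mathfrak{a}$, then by homogeneity $\langle w,v\rangle=c\abs{v}$ for all $v\in C$, with $c>0$. Picking linearly independent $v_1,v_2\in C$ and applying this to $v_1$, $v_2$, and $v_1+\varepsilon v_2\in C$ for small $\varepsilon>0$ gives $\abs{v_1+\varepsilon v_2}=\abs{v_1}+\varepsilon\abs{v_2}$, forcing $v_1$ and $v_2$ to be positively proportional---a contradiction. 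Hence $r=1$.

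I expect the main obstacle to be this reduction step rather than the final estimate: one must justify the root-theoretic description of $\tr U^s(v)$ for $v$ in a maximal flat---that along such a geodesic the curvature operator is parallel with eigenvalue $-\alpha(v)^2$ on the restricted root space of $\alpha$ and $0$ on $\mathfrak{a}\cap v^\perp$, so that $-U^s(v)=\abs{\alpha(v)}\,Id$ there and $0$ on $\mathfrak{a}\cap v^\perp$---and then transfer it to all of $S\wt{M}$ using that $K$ acts by isometries fixing $o$. Once the expression for $\tr U^s$ is available the non-constancy argument for $r\geq 2$ is the elementary equality case of the triangle inequality used above, and the rank one case is immediate.
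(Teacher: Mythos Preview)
Your argument is correct and follows the same route as the paper: reduce to noncompact type, invoke the root-space identity $\tr U^s(v)=-\sum_{\alpha\in Rt}k_\alpha\abs{\alpha(v)}$ from~\cite{eschenburg80}, and observe that for rank at least two this cannot be constant on the unit sphere of a maximal flat. The paper simply asserts this last non-constancy; your linearity-on-a-chamber plus equality-in-the-triangle-inequality argument supplies the details the paper omits, and your treatment of the ``if'' direction (rank one $\Rightarrow$ asymptotically harmonic) via the same formula is also a welcome elaboration.
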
 

It is a result of Lichnerowicz~\cite{lichnerowicz44} that a harmonic manifold which is a Riemannian product must be flat. The following two lemmas show that the same is true for asymptotically harmonic manifolds without focal points.

\begin{lemma}
\label{lem:prod1}
Supppose $(M,g)=(M_1 \times M_2,g_1 \times g_2)$ is a Riemannian product and $(M,g)$, $(M_1,g_1)$, $(M_2,g_2)$ have no conjugate points. Let $U^s, U_1^s, U_2^s$ be the stable Riccati solutions for $M,M_1,M_2$ respectively. If $M$ is asymptotically harmonic then $M_1$ and $M_2$ are asymptotically harmonic and $\tr U^s = \tr U_1^s = \tr U_2^s =0$.
\end{lemma}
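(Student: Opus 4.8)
The plan is to exploit the fact that the curvature operator of a Riemannian product splits as a block-diagonal operator, and that the stable Riccati solution is constructed canonically from the curvature along a geodesic. First I would observe that a unit geodesic $\gamma_v$ in $M = M_1 \times M_2$ projects to (rescaled) geodesics in each factor: writing $v = (v_1, v_2)$ with $\abs{v_1}^2 + \abs{v_2}^2 = 1$, the curve $\gamma_v(t)$ has components $\gamma_{v_1}$ and $\gamma_{v_2}$ traversed at speeds $\abs{v_1}$ and $\abs{v_2}$. Along such a geodesic the curvature tensor $R(t)$ is block diagonal with respect to the splitting $T_{\gamma(t)}M = T M_1 \oplus T M_2$, with the $i$-th block being $\abs{v_i}^2$ times the (reparametrized) curvature operator of $M_i$. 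Since the Jacobi tensor $\JJ(t)$, the tensors $E_T(t)$, and hence $U_T^s(v)$ and $U^s(v)$ are all built solely out of $R(t)$ by solving ODEs with block-diagonal-preserving initial data ($\JJ(0) = 0$, $\JJ'(0) = Id$), they too are block diagonal. Concretely, one gets $U^s(v) = \abs{v_1} U_1^s(v_1/\abs{v_1}) \oplus \abs{v_2} U_2^s(v_2/\abs{v_2})$ when both $v_i \neq 0$, with the obvious degenerate reading when some $v_i = 0$ (then that block is $0$, since a zero-speed "geodesic" contributes no curvature).

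Taking traces, this gives $\tr U^s(v) = \abs{v_1}\, \tr U_1^s(v_1/\abs{v_1}) + \abs{v_2}\, \tr U_2^s(v_2/\abs{v_2})$ for all $v$ with $v_1, v_2 \neq 0$. Now I would use the hypothesis that $\tr U^s \equiv c$ is constant on $SM$. Fix a unit vector $w_2 \in SM_2$ and let $\abs{v_1} \to 0$ along vectors of the form $v = (v_1, v_2)$ with $v_2/\abs{v_2} \to w_2$; by continuity of $U^s$ (which solves a smooth ODE with parameters) and boundedness of $\tr U_1^s$, the first term vanishes and we get $\tr U_2^s(w_2) = c$. Since $w_2$ was arbitrary, $\tr U_2^s$ is constant, equal to $c$; symmetrically $\tr U_1^s \equiv c$. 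Plugging back into the trace identity along, say, the diagonal $\abs{v_1} = \abs{v_2} = 1/\sqrt 2$ yields $c = \frac{1}{\sqrt 2} c + \frac{1}{\sqrt 2} c = \sqrt 2\, c$, forcing $c = 0$. Hence $\tr U^s = \tr U_1^s = \tr U_2^s = 0$ and both factors are asymptotically harmonic.

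The main obstacle I anticipate is making the block-decomposition of $U^s(v)$ rigorous at the level of the limit $T \to \infty$ and near the degenerate locus where one $v_i$ vanishes. The decomposition of $\JJ(t)$, $E_T(t)$, and $U_T^s(v)$ is a routine consequence of uniqueness of solutions to the relevant ODEs applied block by block, but one must check that the monotone limit $U_T^s(v) \to U^s(v)$ commutes with the projections onto the two factors --- this follows because each block of $U_T^s(v)$ is itself monotone and bounded (by Green's argument, or by noting each block is the stable Riccati solution of a factor) and hence converges to the corresponding block of the limit. The behavior as $\abs{v_i} \to 0$ requires a continuity statement for $U^s$ as a function on $SM$; I would either invoke the known continuity of the stable Riccati solution for manifolds without conjugate points, or argue directly that $\abs{v_i}\, \tr U_i^s(v_i/\abs{v_i})$ stays bounded (since $\tr U_i^s$ is bounded on the compact $SM_i$ in the situations of interest, or more generally by Green's uniform bound) and therefore its contribution tends to $0$. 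Once continuity is in hand, the rest is the elementary limiting and scaling argument above.
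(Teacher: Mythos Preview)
Your approach is correct and essentially the same as the paper's: both establish the block decomposition $U^s(v_1,v_2)=\abs{v_1}\,U_1^s(v_1/\abs{v_1})\oplus\abs{v_2}\,U_2^s(v_2/\abs{v_2})$ via the splitting of the Jacobi tensors $E_T$, take traces, and then use the diagonal vector $(v_1/\sqrt 2,v_2/\sqrt 2)$ to force the constant to vanish. The one unnecessary complication in your argument is the limit $\abs{v_1}\to 0$: the paper simply evaluates the trace identity at $(v_1,0)$ and $(0,v_2)$ directly, using the degenerate reading you yourself noted (the block over the zero component is zero), so no continuity or boundedness of $\tr U_i^s$ is needed.
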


\begin{proof} 
Suppose $M$ is asymptotically harmonic with $\tr U^s \equiv \alpha$. Viewing $T_{(x_1,x_2)}M = T_{x_1} M_1 \times T_{x_2} M_2$ and picking $(v_1,v_2) \in SM$ we have
\begin{align}
\label{eq:prod_lemma}
\tr U^s(v_1,v_2) = \norm{v_1} \tr U_1^s(v_1/\norm{v_1}) + \norm{v_2} \tr U_2^s(v_2/\norm{v_2}).
\end{align}
To see this let $\nabla,\nabla^1,\nabla^2$ be the Levi-Civita connections on $M,M_1,M_2$ respectively. Then
\begin{align*}
\nabla_{(Y_1,Y_2)} (X_1,X_2) = (\nabla^1_{Y_1} X_1, \nabla^2_{Y_2} X_2)
\end{align*}
implying, using the notation in Section 2, that
\begin{align*}
E_T(t) = \left(E^1_T\left(\norm{v_1}t\right), E^2_T\left(\norm{v_2}t\right)\right)
\end{align*}
where $E_T,E^1_T,E^2_T$ are the Jacobi tensors along the geodesics $\gamma_{(v_1,v_2)}$ in $M$, $\gamma_{v_1/\norm{v_1}}$ in $M_1$, $\gamma_{v_2/\norm{v_2}}$ in $M_2$ respectively that are equal to the identity at $t=0$ and vanish at $t=T$. Then taking the limit as $T\rightarrow \infty$ of $E^\prime_T(0)$ yields equation~\ref{eq:prod_lemma}.

So if $v_1 \in SM_1$ and $v_2 \in SM_2$, plugging in $(v_1,0)$ and $(0,v_2)$ into Equation~\ref{eq:prod_lemma} yields $\tr U_1^s(v_1) = \alpha$ and $\tr U_2^s(v_2) \equiv \alpha$. But plugging in $(v_1/\sqrt{2},v_2/\sqrt{2})$ in Equation~\ref{eq:prod_lemma} yields 
\begin{align*}
\alpha = \tr U^2(v_1/\sqrt{2},v_1/\sqrt{2}) = \sqrt{2} \alpha
\end{align*}
and so $\alpha =0$.
\end{proof}

\begin{lemma}
\label{lem:prod2}
Let $(M,g)$ be an asymptotically harmonic manifold without focal points and $\tr U^s \equiv 0$, then $(M,g)$ is flat.
\end{lemma}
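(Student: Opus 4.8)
The plan is to show that the Riccati solution $U^s$ itself vanishes identically, which immediately gives $R \equiv 0$ via the Riccati equation $(U^s)' + (U^s)^2 + R = 0$. The starting point is the sign information available on a manifold without focal points: there $U^s(v)$ is negative semidefinite and $U^u(v)$ is positive semidefinite (equivalently, the stable Jacobi tensors are nonincreasing in norm, which is exactly the no-focal-points condition), while the monotonicity formula~\eqref{eq:riccati_mono} shows $U^s \le U^u$ as symmetric operators. Thus for each $v \in SM$ the symmetric operator $U^s(v)$ is $\le 0$, so $\tr U^s(v) = 0$ forces every eigenvalue to be $0$, i.e. $U^s(v) = 0$ for all $v$. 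Symmetrically $U^u(v) = -U^s(-v) = 0$. Feeding $U^s \equiv 0$ back into its Riccati equation gives $R(t) \equiv 0$ along every geodesic, hence the curvature tensor vanishes and $(M,g)$ is flat.

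The key steps, in order, are: (1) recall that ``no focal points'' means precisely that for the Jacobi tensor $\JJ$ with $\JJ(0)=0$, $\JJ'(0)=\mathrm{Id}$ the function $t \mapsto \norm{\JJ(t)X}$ is nondecreasing for $t \ge 0$; equivalently, the stable solutions $E_T$ have $\frac{d}{dt}\norm{E_T(t)X}^2 \le 0$, so that $U^s_T(v) = E_T'(0)$ and hence the limit $U^s(v)$ is negative semidefinite. (2) Combine with monotonicity~\eqref{eq:riccati_mono} and the definition $U^u(v) = -U^s(-v)$ to record that $U^s(v) \le 0 \le U^u(v)$ as symmetric endomorphisms of $v^\perp$. (3) Use $\tr U^s(v) = 0$ together with $U^s(v) \le 0$ to conclude $U^s(v) = 0$ pointwise on $SM$. (4) Substitute into the Riccati equation to get $R \equiv 0$, hence $M$ is flat.

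The only place requiring care — and the main obstacle — is Step (1): verifying that the absence of focal points really yields $U^s(v) \le 0$ (negative \emph{semi}definiteness, not merely a trace bound), and checking that this sign passes to the monotone limit as $T \to \infty$. This is a standard fact (it appears, for instance, in Eschenburg~\cite{eschenburg80} and in the no-focal-points literature: the stable Busemann function is convex, so its Hessian $-U^s$ is positive semidefinite), so I would cite it rather than reprove it, but one should state it explicitly since the whole argument collapses to one line once it is in hand. Everything after that is immediate algebra on symmetric operators and does not even use compactness or finite volume.
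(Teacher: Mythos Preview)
Your proposal is correct and follows essentially the same route as the paper: use that no focal points makes $U^s(v)$ negative semidefinite, combine with $\tr U^s \equiv 0$ to force $U^s \equiv 0$, and read off $R \equiv 0$ from the Riccati equation. The paper cites Eberlein~\cite{eberlein73I} for the semidefiniteness and otherwise writes exactly this one-line argument; your Step~(2) involving $U^u$ and the monotonicity formula is correct but superfluous here.
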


\begin{proof}
As $M$ has no focal points $U^s$ is negative semidefinite (see for instance see the proof of Corollary 3.3 in Eberlein~\cite{eberlein73I}), further $\tr U^s  \equiv 0$ thus as $U^s$ is symmetric, $U^s \equiv 0$ and then the Riccati equation 
\begin{align*}
(U^s)^\prime + (U^s)^2+R=0
\end{align*}
implies that $R \equiv 0$.
\end{proof}

\section{A useful function}

As in~\cite{HKS07,knieper11, SS08} we consider the map 
\begin{align*}
v \in SM \rightarrow V(v)=U^u(v)-U^s(v) \in \text{End}(v^{\bot}).
\end{align*}
Let $\phi_t:SM \rightarrow SM$ be the geodesic flow on $SM$. Because $U^{s}, U^u$ solve the Riccati equation, for any $v \in SM$ the path $t \rightarrow V(\phi_t v)$ satisfies the differential equation
\begin{align*}
V^\prime = XV + VX
\end{align*}
where $X(t) = -\frac{1}{2}(U^u(\phi_t v)+U^s(\phi_t v))$. We now relate $V(v)$ to the rank of $v \in SM$ and to Bolton's criterium for the geodesic flow being Anosov. We will need the following description of stable and unstable Jacobi fields.

\begin{lemma}
\label{lem:kerV}
Let $(M,g)$ be a Riemannian manifold without focal points. Suppose that $\gamma_v$ is the unit speed geodesic and $J(t)$ is a Jacobi field along $\gamma_v$. Then $\norm{J(t)} \leq C$ for all $t \in \RR$ if and only if $(J(0),J^\prime(0)) \in E^s(v) \cap E^u(v)$. In particular, $\det V(v) = 0$ if and only if $v$ has rank two or more.
\end{lemma}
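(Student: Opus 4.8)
The plan is to establish the boundedness characterization of Jacobi fields first and then read off the statement about the rank by a dimension count. Throughout there is no loss in assuming $J$ is orthogonal to $\gamma_v$, since any bounded Jacobi field differs from an orthogonal one by a constant multiple of $\gamma_v^\prime$. For the easy inclusion, suppose $(J(0),J^\prime(0))\in E^s(v)\cap E^u(v)$. Membership in $E^s(v)$ means $J^\prime(0)=U^s(v)J(0)$, so $J(t)=\lim_{T\to\infty}E_T(t)J(0)$; since $M$ has no focal points, the norm of the Jacobi field $t\mapsto E_T(t)J(0)$, which vanishes at $t=T$, is nonincreasing on $[0,T]$, whence $\norm{E_T(t)J(0)}\le\norm{J(0)}$ and, letting $T\to\infty$, $\norm{J(t)}\le\norm{J(0)}$ for $t\ge 0$. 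The same argument applied to $-v$ shows membership in $E^u(v)$ forces $\norm{J(t)}\le\norm{J(0)}$ for $t\le 0$, so $J$ is bounded on $\RR$.

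\emph{The hard inclusion.} Now assume $\norm{J(t)}\le C$ for all $t$; by the $v\leftrightarrow -v$ symmetry it suffices to show $J^\prime(0)=U^s(v)J(0)$. For $T>0$ set $D_T=J-E_T(\cdot)J(0)$, a Jacobi field with $D_T(0)=0$ and $D_T(T)=J(T)$, so $D_T(t)=\JJ(t)D_T^\prime(0)$ with $D_T^\prime(0)=J^\prime(0)-U_T^s(v)J(0)$. Since $M$ has no focal points, $\norm{D_T(t)}$ is nondecreasing on $[0,\infty)$, hence $\norm{\JJ(T)D_T^\prime(0)}=\norm{D_T(T)}=\norm{J(T)}\le C$; writing $\mu(T)$ for the smallest singular value of $\JJ(T)$ this gives $\norm{D_T^\prime(0)}\le C/\mu(T)$. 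The crux is that $\mu(T)\to\infty$ as $T\to\infty$: letting $T_2\to\infty$ in Equation~\ref{eq:riccati_mono} shows the matrix $\int_{T_1}^\infty\JJ^{-1}(s)(\JJ^{-1}(s))^*\,ds$ converges, hence so does its trace; the trace integrand is at least $\mu(s)^{-2}$, and $\mu(s)$ is nondecreasing on $(0,\infty)$ (it is the infimum over unit $\xi$ of the nondecreasing functions $\norm{\JJ(s)\xi}$), so the nonincreasing nonnegative integrable function $\mu(s)^{-2}$ tends to $0$. Therefore $\norm{D_T^\prime(0)}\to 0$ and $J^\prime(0)=\lim_{T\to\infty}U_T^s(v)J(0)=U^s(v)J(0)$. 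I expect this singular-value estimate to be the one real obstacle; everything else uses only the no-focal-points monotonicity of Jacobi fields (as in Eberlein~\cite{eberlein73I}) and the construction of the $E_T$. One could instead invoke the standard identification of the stable Green subbundle with forward-bounded Jacobi fields, but the argument above keeps things self-contained.

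\emph{The rank statement.} The operator $V(v)=U^u(v)-U^s(v)$ on $v^\bot$ is symmetric, so $\det V(v)=0$ iff $\ker V(v)\ne\{0\}$. If $0\ne X\in\ker V(v)$, then $U^s(v)X=U^u(v)X$, so $(X,U^s(v)X)$ is a nonzero element of $E^s(v)\cap E^u(v)$; by the equivalence just proved the Jacobi field $Y$ with these initial data is bounded, nonzero, and orthogonal to $\gamma_v$. As $\gamma_v^\prime$ is a bounded Jacobi field not proportional to $Y$, the space of bounded Jacobi fields along $\gamma_v$ has dimension at least $2$, i.e.\ $v$ has rank at least $2$. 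Conversely, if $v$ has rank at least $2$, choose a bounded Jacobi field $Y$ not proportional to $\gamma_v^\prime$; by the symmetries of the curvature tensor $t\mapsto g(Y(t),\gamma_v^\prime(t))$ is affine, and it is bounded, hence constant, so $Y$ splits as a constant multiple of $\gamma_v^\prime$ plus a nonzero bounded Jacobi field $Y^\bot$ orthogonal to $\gamma_v$. By the equivalence, $(Y^\bot(0),(Y^\bot)^\prime(0))\in E^s(v)\cap E^u(v)$, so $U^s(v)Y^\bot(0)=(Y^\bot)^\prime(0)=U^u(v)Y^\bot(0)$; since $Y^\bot(0)\ne 0$ this produces a nonzero element of $\ker V(v)$, whence $\det V(v)=0$.
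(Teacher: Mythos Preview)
Your proof is correct. The easy direction and the rank statement match the paper's approach essentially verbatim (the paper leaves the rank statement implicit, so your spelling it out is a plus).

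The hard direction is where you diverge. The paper subtracts from $J$ the \emph{limiting} stable Jacobi field $Y$ with $Y(0)=J(0)$, $Y'(0)=U^s(v)J(0)$, observes that $J-Y$ vanishes at $0$ and is bounded forward in time, and then invokes a theorem of Goto~\cite{goto78} asserting that in a manifold without focal points any nontrivial Jacobi field vanishing at a point has norm tending to infinity. You instead subtract the \emph{approximating} field $E_T(\cdot)J(0)$ and bound $\norm{D_T'(0)}$ by $C/\mu(T)$, where $\mu(T)$ is the least singular value of $\JJ(T)$; the key new step is your argument that $\mu(T)\to\infty$, which you extract from the convergence of the integral in~\eqref{eq:riccati_mono} together with the monotonicity of $s\mapsto\norm{\JJ(s)\xi}$. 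This is a clean, self-contained substitute for Goto's result (indeed it essentially reproves it in the form needed here), and it has the virtue of using only objects already set up in Section~2. The paper's route is shorter on the page but imports an external reference; yours trades a citation for a short explicit estimate. One small remark: in the last paragraph, the assertion $Y^\bot(0)\neq 0$ deserves a word of justification---if $Y^\bot(0)=0$ then membership in $E^s(v)$ forces $(Y^\bot)'(0)=U^s(v)\cdot 0=0$, so $Y^\bot\equiv 0$---but this is immediate from what you have already proved.
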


Lemma~\ref{lem:kerV} is well known in the case in which $(M,g)$ has no conjugate points and sectional curvature bounded below (see for instance~\cite[Corollary 2.14]{eberlein73I}). Unable to find a reference for the situation above, we include the short proof. 

Recall that if $J$ is a Jacobi field in a manifold without focal points and $J(0)=0$ then 
\begin{align}
\label{eq:jacobi_no_fp}
\frac{d}{dt} \norm{J(t)}^2 > 0.
\end{align}

\begin{proof}
Suppose $(J(0),J^\prime(0)) \in E^s(v) \cap E^u(v)$, then $J$ is the limit of Jacobi fields $J_T$ with $J_T(0)=J(0)$ and $J_T(T)=0$ as $T \rightarrow \infty$. Then applying equation~\ref{eq:jacobi_no_fp} to the Jacobi field $Y(t)=J_T(T-t)$ we see that 
\begin{align*}
\norm{J_T(t)} \leq \norm{J(0)}
\end{align*}
for $0 \leq t \leq T$. Further for $t$ fixed, $J_T(t) \rightarrow J(t)$ so we obtain that $\norm{J(t)} \leq \norm{J(0)}$ for all $t \geq 0$. Now the Jacobi field $J$ is also the limit of Jacobi fields $J_T$ with $J_T(0)=J(0)$ and $J_T(T)=0$ as $T \rightarrow -\infty$. Thus the above argument shows that $\norm{J(t)} \leq \norm{J(0)}$ for all $t \in \RR$.

Now suppose $J$ is a Jacobi field along $\gamma_v$ such that $\norm{J(t)} \leq C$. Let $Y$ be the stable Jacobi field with $Y(0)=J(0)$ and $Y^\prime(0)=U^s(v)J(0)$. Then $J-Y$ is a Jacobi field with $(J-Y)(0)=0$ and by the above argument $\norm{(J-Y)(t)} \leq C+\norm{Y(0)}$ for $t \geq 0$. If $Y^\prime(0)=J^\prime(0)$ then $(J(0),J^\prime(0)) \in E^s(v)$. Otherwise $(J-Y)^\prime(0)\neq 0$ and by a result of M.S. Goto~\cite[Theorem 1]{goto78},
\begin{align*}
\norm{(J-Y)(t)} \rightarrow \infty
\end{align*}
as $t \rightarrow \infty$ which is impossible. A similar argument shows that $(J(0),J^\prime(0)) \in E^u(v)$
\end{proof}

We will next show that $v \rightarrow \det V(v)$ is continuous and invariant under the geodesic flow. The following lemma is given in~\cite{HKS07,SS08}, but for completeness we include the short proof.

\begin{lemma}
\label{lem:inv}
Let $(M,g)$ be an asymptotically harmonic manifold. Then the map $v \rightarrow \det(V(v))$ is invariant under the geodesic flow.
\end{lemma}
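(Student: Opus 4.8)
The plan is to track how $V(v)=U^u(v)-U^s(v)$ evolves along the geodesic flow and show that $\det V$ is constant on each orbit. As noted in the excerpt, if we set $X(t)=-\tfrac12\big(U^u(\phi_t v)+U^s(\phi_t v)\big)$, then the path $W(t):=V(\phi_t v)$ solves the linear ODE $W' = X W + W X$. The key observation is that $X(t)$ is a symmetric endomorphism of $\gamma_v'(t)^\bot$ (since $U^s$ and $U^u$ are symmetric), and that the solution of this Lyapunov-type equation can be written explicitly in terms of the fundamental solution of $Y' = X Y$. Concretely, if $A(t)$ is the $\mathrm{End}(v^\bot)$-valued solution of $A' = X A$, $A(0) = \mathrm{Id}$ (transported appropriately along $\gamma_v$), then $W(t) = A(t)\, V(v)\, A(t)^*$ solves the equation, because differentiating gives $A' V A^* + A V (A')^* = X A V A^* + A V A^* X^* = X W + W X$, using $X^* = X$. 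By uniqueness of solutions to linear ODEs, this is $V(\phi_t v)$.

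From this representation the result is immediate: $\det V(\phi_t v) = \det\big(A(t)\big)\cdot \det V(v) \cdot \det\big(A(t)^*\big) = \big(\det A(t)\big)^2 \det V(v)$. So I need to show $\det A(t) \equiv 1$, i.e. that $A(t)\in \mathrm{SL}$. Liouville's formula gives $\tfrac{d}{dt}\log\det A(t) = \tr X(t) = -\tfrac12\big(\tr U^u(\phi_t v)+\tr U^s(\phi_t v)\big)$. This is exactly where the asymptotic harmonicity hypothesis enters: $\tr U^s \equiv \alpha$ is constant on $SM$, and $\tr U^u(v) = \tr\big(-U^s(-v)\big) = -\tr U^s(-v) = -\alpha$, so $\tr X \equiv 0$. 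Hence $\det A(t) \equiv \det A(0) = 1$, giving $\det V(\phi_t v) = \det V(v)$ for all $t$.

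The only genuine subtlety is making the endomorphism-valued fundamental solution $A(t)$ well-defined: $V(\phi_t v)$ acts on the moving space $\gamma_v'(t)^\bot$, so one should either fix trivializations of the normal bundle along $\gamma_v$ by parallel transport (under which $U^s, U^u, V, X$ all become time-dependent endomorphisms of the fixed space $v^\bot$ and the Riccati/Lyapunov equations hold verbatim), or phrase everything in terms of the parallel frame from the outset. Once that bookkeeping is in place the argument is the four-line computation above. I expect this trivialization step to be the main thing requiring care, but it is routine; the mathematical content is entirely the identity $\tr U^s + \tr U^u \equiv 0$ forcing $\det A \equiv 1$.
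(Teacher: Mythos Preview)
Your argument is correct and rests on the same key identity as the paper's proof, namely $\tr X \equiv 0$ because $\tr U^u(v)=-\tr U^s(-v)$ and $\tr U^s$ is constant. The paper, however, organizes the computation slightly differently: it differentiates $\log\det V$ directly, obtaining $\tfrac{d}{dt}\log\det V=\tr(\dot V V^{-1})=2\tr X=0$. Since that formula requires $V$ invertible, the paper treats the case $\det V(v)=0$ separately, invoking Lemma~\ref{lem:kerV} to say that $\det V(v)=0$ forces the rank of $v$ to be at least two, whence $\det V(\phi_t v)=0$ for all $t$. Your route via the fundamental solution $A(t)$ of $A'=XA$ and the representation $V(\phi_t v)=A(t)\,V(v)\,A(t)^*$ handles both cases uniformly and never needs $V$ to be invertible, so it is marginally cleaner in that it avoids the case split and the appeal to the rank characterization. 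The trade-off is the extra bookkeeping you flag (trivializing the normal bundle by parallel transport), which the paper's direct computation sidesteps.
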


\begin{proof}
For $v \in SM$ we will show that the function $t \rightarrow \det(V(\phi_t v))$ is constant. If $\det(V(v))=0$, then the rank of $v$ is 2 or more which implies that $\det(V(\phi_t v))=0$ for all $t$.

If $\det(V(v)) \neq 0$, then letting $V(t) = V(\phi_t v)$ we obtain
\begin{align*}
\frac{d}{dt} \log \det V = \tr \dot{V} V^{-1}=\tr(XV+VX)V^{-1} =2\tr X = -\tr U^u - \tr U^s =0
\end{align*}
as $U^u(v)=-U^s(-v)$ and $\tr U^s \equiv \alpha$.
\end{proof}

In general the map $v \rightarrow U^s(v)$ may only be measurable, but in the case when $M$ is asymptotically harmonic it is not hard to show continuity.

\begin{lemma} If $(M,g)$ is asymptotically harmonic, the map $v \rightarrow U^s(v)$ is continuous.
\end{lemma}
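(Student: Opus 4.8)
The plan is to exploit the monotone construction of $U^s$ together with the hypothesis that $\tr U^s$ is constant. First I would observe that for each fixed $T>0$ the endomorphism $U^s_T(v)=E_T^\prime(0)$ depends continuously on $v$: here $E_T$ solves the Jacobi equation along $\gamma_v$ with boundary values $Id$ at $t=0$ and $0$ at $t=T$, and since an asymptotically harmonic manifold has no conjugate points this boundary value problem is nondegenerate. Concretely, the subspace $E^s_T(v)=\{(J(0),J^\prime(0)):J\text{ a Jacobi field along }\gamma_v,\ J(T)=0\}$ is the image of a fixed subspace under a continuously varying linear isomorphism (the time-$T$ Jacobi flow), hence varies continuously with $v$, and it is a graph over $v^\bot$ because $0$ and $T$ are not conjugate; its slope is $U^s_T(v)$. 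Thus $v\mapsto U^s_T(v)$ is a continuous section of the endomorphism bundle of $\{v^\bot\}_{v\in SM}$, and $(v,Y)\mapsto g(U^s_T(v)Y,Y)$ is jointly continuous.

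Next I would use the monotone convergence recorded in Section 2 --- that $U^s_{T_2}(v)-U^s_{T_1}(v)\geq 0$ for $T_2>T_1$ and $U^s_T(v)\to U^s(v)$ as $T\to\infty$ --- to conclude that $g(U^s(v)Y,Y)=\sup_{T>0}g(U^s_T(v)Y,Y)$ for every $v\in SM$ and $Y\in v^\bot$. A pointwise supremum of continuous functions is lower semicontinuous, so $(v,Y)\mapsto g(U^s(v)Y,Y)$ is lower semicontinuous. Now fix $v_0\in SM$ and a neighborhood $W$ of $v_0$ carrying a continuously varying orthonormal frame $e_1(v),\dots,e_{n-1}(v)$ of $v^\bot$, $n=\dim M$. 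Each $f_j(v):=g(U^s(v)e_j(v),e_j(v))$ is then lower semicontinuous on $W$, but asymptotic harmonicity gives $\sum_j f_j(v)=\tr U^s(v)\equiv\alpha$; hence each $f_j=\alpha-\sum_{i\neq j}f_i$ is also upper semicontinuous, and so continuous. Repeating this with the frame in which a pair $e_i,e_j$ has been replaced by $(e_i+e_j)/\sqrt 2$ and $(e_i-e_j)/\sqrt 2$ shows that $v\mapsto g\big(U^s(v)(e_i(v)+e_j(v)),e_i(v)+e_j(v)\big)$ is continuous as well, and since $U^s(v)$ is symmetric, polarization then gives continuity of every entry $g(U^s(v)e_i(v),e_j(v))$ on $W$. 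As $v_0$ was arbitrary, $v\mapsto U^s(v)$ is continuous.

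I expect the only delicate step to be the continuous dependence of $U^s_T$ on $v$, and even that is just standard continuous dependence for a nondegenerate linear boundary value problem; the remainder is soft semicontinuity bookkeeping. It is worth noting that no a priori operator bound on $U^s(v)$ enters the argument: constancy of $\sum_j f_j$ forces each lower semicontinuous $f_j$ to be locally bounded on its own, which is essentially why continuity can fail for general manifolds without conjugate points but not in the asymptotically harmonic case.
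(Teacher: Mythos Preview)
Your argument is correct and rests on the same three ingredients as the paper's proof: continuity of the finite--time approximations $U^s_T$, the monotone convergence $U^s_T\nearrow U^s$, and constancy of $\tr U^s$. The execution, however, is different in two places. First, for continuity of $U^s_T$ the paper passes to the universal cover and identifies $U^s_T(v)$ with $-\mathrm{Hess}_{\pi(v)}b_{v,T}$ for the truncated Busemann function $b_{v,T}(x)=d(\gamma_v(T),x)-T$, which is smooth away from $\gamma_v(T)$; your Jacobi--flow argument is equally valid and avoids the detour through the cover. Second, and more interestingly, the paper combines the monotone trace convergence with Dini's theorem to get $\tr U^s_T\to\tr U^s$ locally uniformly, and then observes that for the positive semidefinite difference $U^s-U^s_T$ one has the operator bound $0\leq U^s-U^s_T\leq\tr(U^s-U^s_T)\,Id$, giving locally uniform convergence of the operators themselves. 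Your semicontinuity bookkeeping (each diagonal entry is a sup of continuous functions, hence lsc; the constant trace forces each to be usc as well; polarization recovers the off--diagonal entries) is a pleasant alternative that sidesteps Dini and the trace--norm inequality. Both routes are short; the paper's yields a slightly stronger intermediate conclusion (locally uniform convergence of $U^s_T$ to $U^s$), while yours makes the role of the constant--trace hypothesis perhaps more transparent.
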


\begin{proof}
It is enough to verify the lemma on the universal cover of $M$, so we may as well assume that $M$ is simply connected. 

The map $ v \rightarrow \tr U^s(v)$, being constant, is continuous. Further by equation~\ref{eq:riccati_mono}, 
\begin{align}
\label{eq:mono_conv}
g(U_{T_1}^s(v) X, X) \leq g(U_{T_2}^s(v)X,X) \leq g(U^s(v)X,X)
\end{align}
for $T_1 < T_2$ and $X \in v^{\bot}$. So $\tr U_T^s(v) \rightarrow \tr U^s(v)$ monotonically and by Dini's Theorem $\tr U_T^s(v)$ converges to $\tr U^s(v)$ locally uniformally as $T \rightarrow \infty$. 

We now claim that the endomorphisms $U_T^s(v)$ converges to $U^s(v)$ locally uniformally. Let $\lambda_T(v)$ be the maximum eigenvalue of $U^s(v)-U_T^s(v)$. As $U^s(v)-U_T^s(v)$ is positive semidefinite we have
\begin{align*}
0 \leq U^s(v)-U_T^s(v) \leq \lambda_T(v) Id \leq \tr\big(U^s(v)-U_T^s(v)\big) Id
\end{align*}
and as the right hand side of the above equation coverges locally uniformally to zero, so does $U^s(v)-U_T^s(v)$.

We will now show that the map $v \rightarrow U_T^s(v)$ is continuous, which will prove the Lemma. Consider the function
\begin{align*}
b_{v,T}(x) = d(\gamma_v(T),x)-T
\end{align*}
as $M$ has no conjugate points and is simply connected, $b_{v,T}$ is $C^\infty$ on $M \setminus \{ \gamma_v(T)\}$. Further, $U^s_T(v)$ is equal to $-\text{Hess}_{\pi(v)} b_{v,T}$ and
\begin{align*}
v \rightarrow -\text{Hess}_{\pi(v)} b_{v,T}
\end{align*}
is continuous in $v$ when $T \neq 0$. 
\end{proof}

\section{Proof of the main theorems}

Assume $(M,g)$ is a finite volume, asymptotically harmonic manifold without focal points. If $\wt{M}$ is the universal cover of $M$, the rank rigidity theorem says that $\wt{M}$ is either a Riemannian product, a higher rank symmetric space, or rank one. By Lemma~\ref{lem:no_higher_rank}, Lemma~\ref{lem:prod1}, and Lemma~\ref{lem:prod2}, if $\wt{M}$ has higher rank then $(M,g)$ is flat. 

It remains to consider the case in which $(M,g)$ has rank one. Ballmann~\cite{ballmann82} showed that the geodesic flow is topologically transitive on $SM$ if $M$ is a rank one finite volume manifold with nonpositive curvature. Hurley~\cite{Hurley86} showed the same is true if nonpositive curvature is replaced by no focal points. Then as $v \rightarrow \det V(v)$ is continuous and invariant under the geodesic flow, it must be constant. Further $\det V(v)$ is not identically zero because $M$ has rank one. Thus $\det V(v) \equiv \beta \neq 0$. 

Now by hypothesis $\tr V(v) = \tr U^u(v)-\tr U^s(v)=2\alpha$ is constant and as $(M,g)$ has no focal points $V(v)=U^u(v)-U^s(v)$ is positive semidefinite (again see the proof of Corollary 3.3 in Eberlein~\cite{eberlein73I}). In particular, the largest eigenvalue of $V(v)$ is no larger than $2\alpha$, implying that the smallest eigenvalue of $V(v)$ is at least $\beta (2\alpha)^{1-n}$. As $V(v)$ is symmetric, this implies that
\begin{align*}
g(V(v)X,X) \geq \beta (2\alpha)^{1-n} g(X,X) \text{ for all $X \in v^{\bot}$}.
\end{align*}
As $\tr U^s(u) = -\alpha$ and $U^s(u)$ is negative semidefinite, we also obtain that $\norm{U^s(v)} \leq \alpha$. Now Theorem~\ref{thm:anosov} implies that the geodesic flow on $SM$ is Anosov. This proves Theorem~\ref{thm:main2}.

Theorem~\ref{thm:main} now follows from the results of Foulon and Labourie~\cite{FL92}, Benoist, Foulon, and Labourie~\cite{BFL92}, and Besson, Courtois, and Gallot~\cite{BCG95}. See Knieper~\cite[Theorem 3.6]{knieper11} for details.

\subsection*{Acknowledgements}

I would like to thank Ralf Spatzier for many helpful conversations, Jordan Watkins for informing me of his recent generalization of the rank rigidity theorem, and Gerhard Knieper for several helpful comments on an earlier version of this paper.

\bibliographystyle{plain}
\bibliography{geom}

\begin{thebibliography}{10}

\bibitem{ballmann82}
Werner Ballmann.
\newblock Axial isometries of manifolds of nonpositive curvature.
\newblock {\em Math. Ann.}, 259(1):131--144, 1982.

\bibitem{ballmann85}
Werner Ballmann.
\newblock Nonpositively curved manifolds of higher rank.
\newblock {\em Ann. of Math. (2)}, 122(3):597--609, 1985.

\bibitem{BFL92}
Yves Benoist, Patrick Foulon, and Fran{\c{c}}ois Labourie.
\newblock Flots d'{A}nosov \`a distributions stable et instable
  diff\'erentiables.
\newblock {\em J. Amer. Math. Soc.}, 5(1):33--74, 1992.

\bibitem{BCG95}
G.~Besson, G.~Courtois, and S.~Gallot.
\newblock Entropies et rigidit\'es des espaces localement sym\'etriques de
  courbure strictement n\'egative.
\newblock {\em Geom. Funct. Anal.}, 5(5):731--799, 1995.

\bibitem{bolton79}
J.~Bolton.
\newblock Conditions under which a geodesic flow is {A}nosov.
\newblock {\em Math. Ann.}, 240(2):103--113, 1979.

\bibitem{BS87}
Keith Burns and Ralf Spatzier.
\newblock Manifolds of nonpositive curvature and their buildings.
\newblock {\em Inst. Hautes \'Etudes Sci. Publ. Math.}, (65):35--59, 1987.

\bibitem{DR92}
Ewa Damek and Fulvio Ricci.
\newblock A class of nonsymmetric harmonic {R}iemannian spaces.
\newblock {\em Bull. Amer. Math. Soc. (N.S.)}, 27(1):139--142, 1992.

\bibitem{eberlein73I}
Patrick Eberlein.
\newblock When is a geodesic flow of {A}nosov type? {I}.
\newblock {\em J. Differential Geometry}, 8:437--463, 1973.

\bibitem{eschenburg80}
J.-H. Eschenburg.
\newblock A note on symmetric and harmonic spaces.
\newblock {\em J. London Math. Soc. (2)}, 21(3):541--543, 1980.

\bibitem{FL92}
Patrick Foulon and Fran{\c{c}}ois Labourie.
\newblock Sur les vari\'et\'es compactes asymptotiquement harmoniques.
\newblock {\em Invent. Math.}, 109(1):97--111, 1992.

\bibitem{FM82}
A.~Freire and R.~Ma{\~n}{\'e}.
\newblock On the entropy of the geodesic flow in manifolds without conjugate
  points.
\newblock {\em Invent. Math.}, 69(3):375--392, 1982.

\bibitem{goto78}
Midori~S. Goto.
\newblock Manifolds without focal points.
\newblock {\em J. Differential Geom.}, 13(3):341--359, 1978.

\bibitem{green58}
L.~W. Green.
\newblock A theorem of {E}. {H}opf.
\newblock {\em Michigan Math. J.}, 5:31--34, 1958.

\bibitem{heber06}
J.~Heber.
\newblock On harmonic and asymptotically harmonic homogeneous spaces.
\newblock {\em Geom. Funct. Anal.}, 16(4):869--890, 2006.

\bibitem{HKS07}
Jens Heber, Gerhard Knieper, and Hemangi~M. Shah.
\newblock Asymptotically harmonic spaces in dimension 3.
\newblock {\em Proc. Amer. Math. Soc.}, 135(3):845--849 (electronic), 2007.

\bibitem{Hurley86}
Donal Hurley.
\newblock Ergodicity of the geodesic flow on rank one manifolds without focal
  points.
\newblock {\em Proc. Roy. Irish Acad. Sect. A}, 86(1):19--30, 1986.

\bibitem{knieper11}
Gerhard Knieper.
\newblock New results on noncompact harmonic manifolds.
\newblock to appear in Commentarii Mathematici Helvetici.

\bibitem{knieper94}
Gerhard Knieper.
\newblock Spherical means on compact {R}iemannian manifolds of negative
  curvature.
\newblock {\em Differential Geom. Appl.}, 4(4):361--390, 1994.

\bibitem{ledger57}
A.~J. Ledger.
\newblock Symmetric harmonic spaces.
\newblock {\em J. London Math. Soc.}, 32:53--56, 1957.

\bibitem{lichnerowicz44}
Andr{\'e} Lichnerowicz.
\newblock Sur les espaces riemanniens compl\`etement harmoniques.
\newblock {\em Bull. Soc. Math. France}, 72:146--168, 1944.

\bibitem{manning79}
Anthony Manning.
\newblock Topological entropy for geodesic flows.
\newblock {\em Ann. of Math. (2)}, 110(3):567--573, 1979.

\bibitem{SS08}
Viktor Schroeder and Hemangi Shah.
\newblock On 3-dimensional asymptotically harmonic manifolds.
\newblock {\em Arch. Math. (Basel)}, 90(3):275--278, 2008.

\bibitem{shah11}
Hemangi Shah.
\newblock On 3-dimensional asymptotically harmonic manifolds with minimal
  horospheres.
\newblock preprint.

\bibitem{szabo90}
Z.~I. Szab{\'o}.
\newblock The {L}ichnerowicz conjecture on harmonic manifolds.
\newblock {\em J. Differential Geom.}, 31(1):1--28, 1990.

\bibitem{watkins11}
Jordan Watkins.
\newblock The higher rank rigidity theorem for manifolds with no focal points.
\newblock preprint.

\end{thebibliography}

\end{document}